 \numberwithin{equation}{section}
 \newtheorem{thm}{Theorem}[section]
 \newtheorem{cor}[thm]{Corollary}
 \newtheorem{lem}[thm]{Lemma}
\theoremstyle{definition}
 \newtheorem{defn}[thm]{Definition}
\theoremstyle{remark}
 \newtheorem{exam}[thm]{Example}
\title{Duality results for a general trigonometric approximation problem}
\author{Lutz Klotz \and Conrad M\"adler}
\begin{document}
\maketitle

\begin{abstract}
 Let \(\alpha\in\iva\) and \(\mu\) be a regular finite Borel measure on a locally compact abelian group.
 The paper deals with a general trigonometric approximation problem in \(\Lam\), which arises in prediction theory of harmonizable symmetric \tas{} processes.
 To solve it, a duality method is applied, which is due to Nakazi and was generalized by Miamee and Pourahmadi and in the sequel successfully applied by several authors.
 The novelty of the present paper is that we do not make any additional assumption on \(\mu\).
 Moreover, for \(\alpha=2\), multivariate extensions are obtained.
\end{abstract}

\begin{description}
 \item[Keywords:] Regular Borel measure, space of \tai{} functions, trigonometric approximation, duality.
\end{description}

\section{Introduction}
 Let \(w\) be a weight function on \((-\pi,\pi]\), \(S\) a nonvoid subset of the set \(\Z\) of integers, \(s\in S\), and \(\TZS\) the linear space of trigonometric polynomials with frequencies from \(\ZS\).
 An important task in prediction theory of weakly stationary or, more generally, harmonizable symmetric \tas{} sequences is to compute the prediction error, \tie{}, the distance of the function \(\ec^{\iu s\cdot}\) to the set \(\TZS\) with respect to the metric of the Banach space \(\Law\), \(\alpha\in\iva\).
 In 1984 Nakazi~\zita{MR766702} introduced a new idea into the study of this problem.
 Among other things, his method opened a way for him to give an elegant proof of the celebrated Szeg\H{o} infimum formula.
 Miamee and Pourahmadi~\zita{MR949088} pointed out that the essence of Nakazi's technique is a certain duality between the spaces \(\Law\) and \(\Ldaw\), \(\da \defeq\alpha/(\alpha-1)\), presumed that \(w^\inv\) exists and is integrable.
 In the sequel this duality relation turned out to be rather fruitful.
 It was applied to a variety of sets \(S\), modified, and extended to more general prediction problems, \tcf{}\ the papers~\zitas{MR1443377,MR1230571,MR1289507,MR2262930}, as well as~\zita{MR1170451} for fields on \(\Z^2\),~\zita{MR772194} for processes on discrete abelian groups,~\zita{MR1928917} for multivariate sequences.
 Urbanik~\zita{MR1808671} defined a notion of a dual stationary sequence, \tcf{}~\zitaa{MR1849562}{\csec{8.5}} and~\zita{MR1899439}.
 Kasahara, Pourahmadi and Inoue~\zita{MR2547424} used a modified duality method to obtain series representations of the predictor.
 It should also be mentioned that \cthm{24} of~\zita{MR0009098} was, perhaps, the first published duality result of the type in question, \tcf{} its extension by Yaglom~\zitaa{MR0031214}{\cthm{2}}.

 Many of the preceding results were obtained under the rather strong additional condition that \(w^\inv\) exists and is integrable, although for some special sets authors succeeded in weakening this condition, \tcf{}~\zitas{MR1443377,MR772194,MR2262930}.
 In our paper we study the above described problem for spaces \(\Lam\), where the Borel measure \(\mu\) is not assumed to be absolutely continuous.
 Moreover, motivated by Weron's paper~\zita{MR772194} we shall be concerned with regular finite Borel measures on locally compact abelian groups.
 Since in the literature there exists differing definitions of a regular measure, \rsec{3} deals with the definition and some basic facts of regular measures.
 The results of \rsecss{2}{4} show that under a condition, which is satisfied by many sets \(S\) occurring in application, one can assume that the measure \(\mu\) is absolutely continuous.
 Establishing this result, we introduce a class of sets, which we call class of \tACs{s} and which, as it seems to us, deserves further investigation.
 \rsec{5} gives a solution to the problem if \(\mu\) is absolutely continuous.
 Unlike most of the authors above we do not make any condition on the corresponding Radon-Nikodym derivative.

 In~\zita{MR1462266} there were defined Banach spaces of matrix-valued functions \tai{} with respect to a positive semidefinite matrix-valued measure, see \rsec{2} for the definition and basic facts.
 Since part of our results can be easily generalized, we state and prove them in this more general framework.
 If \(S\) is a singleton, the corresponding results can be used to obtain minimality criteria for multivariate stationary sequences.
 We shall not go into detail but refer to the recent paper~\zita{KM15}, where various minimality notions were discussed.

 If \(X\) is a matrix, denote by \(X^\ad\), \(X^\mpi\), and \(\ran{X}\) its adjoint, Moore-Penrose inverse, and range, \tresp{} The symbol \(\Oe\) stands for the zero element in an arbitrary linear space.

\section{The space $\LaM$}\label{2}
 Let \(q\in\N\), \(\Mggq\) be the cone of all \tpsd{} (hence, \tH{}) \tqqa{matrices} with complex entries.
 Let \(\OA\) be a measurable space and \(M\) an \taval{\Mggq} measure on \(\A\).
 If \(\tau\) is a \tsf{} measure on \(\A\) such that \(M\) is absolutely continuous with respect to \(\tau\), denote by \(\dif M/\dif\tau\) the corresponding Radon-Nikodym derivative and by \(P(\omega)\) the orthoprojection in \(\Cq\) onto \(\ran{\rk{\dif M/\dif\tau}(\omega)}\), \(\omega\in\Omega\).
 Let \(\no{\cdot}\) be the euclidean norm on \(\Cq\) and write the vectors of \(\Cq\) as column vectors.
 Two \tAm{} \taval{\Cq} functions \(f\) and \(g\) are called \tMe{} if \(Pf=Pg\) \tae{\tau} For \(\alpha\in\iva\), denote by \(\LaM\) the space of all (\tMec{es} of) \tAm{} functions \(f\) such that \(\noa{f}\defeq\ek{\int_\Omega\no{\rk{\dif M/\dif\tau}^{1/\alpha}f}^\alpha\dif\tau}^{1/\alpha}<\infty\).
 Recall that the definition of \(\LaM\) does not depend on the choice of \(\tau\) and that \(\LaM\) is a Banach space with respect to the norm \(\noa{\cdot}\).
 Note that \(\LhM\) is a Hilbert space with inner product \(\int_\Omega g^\ad\rk{\dif M/\dif\tau}f\dif\tau\), \(f,g\in\LhM\), and that for \(q=1\), the space \(\LaM\) is the well known space of (equivalence classes of) \tAm{} \taval{\C} functions \tai{} with respect to \(M\).
 The space \(\LhM\) was introduced by I.~S.~Kats~\zita{MR0080280} and in a somewhat more general form by Rosenberg~\zita{MR0163346}.
 Both notions were applied in the theory of weakly stationary processes with the same success, \tcf{}~\zitas{MR0159363,MR0279952} for an application of Kats' and Rosenberg's definitions, \tresp{} An extension to \(\alpha\neq2\) was given by Duran and Lopez-Rodriguez~\zita{MR1462266}, \tcf{}~\zita{MR1160966} for a more general setting of operator-valued measures.
 To simplify the presentation slightly we shall be concerned with the space \(\LaM\) as defined above, which can be considered as a generalization of Kats' definition to the case \(\alpha\neq2\).

\begin{lem}[\tcf{}~\zitaa{MR1462266}{\cthm{2.5}},~\zitaa{MR1160966}{\cthm{9}}]\label{L2.1}
 Let \(\alpha\in\iva\) and \(\da \defeq\alpha/\rk{\alpha-1}\).
 If \(\ell\) is a bounded linear functional on \(\LaM\), then there exists \(g\in\LdaM\) such that \(\ell(f)=\int_\Omega g^\ad\rk{\dif M/\dif\tau}f\dif\tau\) for all \(f\in\LaM\).
 The correspondence \(\ell\mapsto g\) establishes an isometric isomorphism between the dual space of \(\LaM\) and the space \(\LdaM\).
\end{lem}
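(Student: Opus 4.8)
The plan is to reduce the assertion to the classical duality of vector-valued Lebesgue spaces by factorizing the matrix weight. Abbreviate \(W\defeq\dif M/\dif\tau\). For \tae{\tau} \(\omega\) the matrices \(W^{1/\alpha}(\omega)\) and \(W^{1/\da}(\omega)\) are \tpsd{} functions of the single \tpsd{} matrix \(W(\omega)\); hence they commute, both have range \(\ran{W(\omega)}\), and, since \(1/\alpha+1/\da=1\), they satisfy \(W^{1/\alpha}W^{1/\da}=W\). Moreover \(W^{1/\alpha}P=W^{1/\alpha}\) and \(W^{1/\alpha}\rk{W^{1/\alpha}}^\mpi=\rk{W^{1/\alpha}}^\mpi W^{1/\alpha}=P\), and likewise with \(\da\) in place of \(\alpha\). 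Let \(L^\alpha(\tau,\Cq)\) denote the classical Banach space of (\(\tau\)-classes of) \tAm{} \(\Cq\)-valued functions \(h\) with \(\int_\Omega\no{h}^\alpha\dif\tau<\infty\), and define \(U_\alpha f\defeq W^{1/\alpha}f\). Because \(W^{1/\alpha}f=W^{1/\alpha}Pf\), the function \(U_\alpha f\) depends only on the \(M\)-equivalence class of \(f\) and takes its values in \(\ran{P}\); and by definition \(\noa{f}\) equals the norm of \(U_\alpha f\) in \(L^\alpha(\tau,\Cq)\). Thus \(U_\alpha\) is a linear isometry of \(\LaM\) onto the closed subspace \(L^\alpha_P\) of those \(h\in L^\alpha(\tau,\Cq)\) with \(Ph=h\) \tae{\tau}; here \(L^\alpha_P\) is closed since it is the range of the contractive idempotent \(h\mapsto Ph\), and \(U_\alpha\) is onto because \(h\mapsto\rk{W^{1/\alpha}}^\mpi h\) is an inverse, its image lying in \(\LaM\) thanks to \(\no{W^{1/\alpha}\rk{W^{1/\alpha}}^\mpi h}=\no{Ph}=\no{h}\). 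The same construction yields an isometric isomorphism \(U_\da\colon\LdaM\to L^{\da}_P\).

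Next I would invoke the classical fact that, \(\Cq\) being finite-dimensional, the dual of \(L^\alpha(\tau,\Cq)\) is \(L^{\da}(\tau,\Cq)\), the functional attached to \(k\in L^{\da}(\tau,\Cq)\) being \(h\mapsto\int_\Omega k^\ad h\dif\tau\), and this correspondence is isometric. Since \(P\) is \tH{} and idempotent, the pairing splits as \(\int_\Omega k^\ad h\dif\tau=\int_\Omega\rk{Pk}^\ad Ph\dif\tau+\int_\Omega\rk{\rk{I-P}k}^\ad\rk{I-P}h\dif\tau\), the cross terms vanishing pointwise. Combined with the contractivity of \(h\mapsto Ph\), this shows that the functionals on \(L^\alpha_P\) are represented, isometrically, exactly by the elements \(k\in L^{\da}_P\). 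Hence, given a bounded linear functional \(\ell\) on \(\LaM\), the functional \(\ell\circ U_\alpha^\inv\) on \(L^\alpha_P\) is of the form \(h\mapsto\int_\Omega k^\ad h\dif\tau\) for a unique \(k\in L^{\da}_P\) whose \(L^{\da}(\tau,\Cq)\)-norm equals the norm of \(\ell\circ U_\alpha^\inv\), and hence of \(\ell\); I then set \(g\defeq U_\da^\inv k=\rk{W^{1/\da}}^\mpi k\in\LdaM\).

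It remains to verify the representation. For \(f\in\LaM\) put \(h\defeq U_\alpha f=W^{1/\alpha}f\in L^\alpha_P\); using \(W^{1/\da}g=k\), \(\rk{W^{1/\da}}^\ad=W^{1/\da}\), and \(W^{1/\da}W^{1/\alpha}=W\) one computes
\begin{align*}
 \ell(f)&=\rk{\ell\circ U_\alpha^\inv}(h)=\int_\Omega k^\ad W^{1/\alpha}f\dif\tau=\int_\Omega\rk{W^{1/\da}g}^\ad W^{1/\alpha}f\dif\tau\\
 &=\int_\Omega g^\ad W^{1/\da}W^{1/\alpha}f\dif\tau=\int_\Omega g^\ad Wf\dif\tau,
\end{align*}
which is the asserted formula. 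Conversely every \(g\in\LdaM\) defines through this formula a bounded functional on \(\LaM\), and the two assignments are mutually inverse linear maps; since \(U_\da\) is an isometry, the norm of \(g\) in \(\LdaM\) equals that of \(k\) in \(L^{\da}(\tau,\Cq)\), and hence the norm of \(\ell\), so that \(\ell\mapsto g\) is an isometric isomorphism of the dual of \(\LaM\) onto \(\LdaM\). The main obstacle is not the duality itself---which is classical because \(\Cq\) is finite-dimensional---but the measurable matrix calculus underlying the first paragraph: one must check that \(W^{1/\alpha}\), \(W^{1/\da}\), and their Moore--Penrose inverses are again \tAm{}, and that the pointwise identities relating them to \(P\) hold \tae{\tau}. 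These facts follow from the continuity of the root and pseudoinverse operations on the cone \(\Mggq\), but assembling them---and thereby the surjectivity of \(U_\alpha\) onto \(L^\alpha_P\)---is the technical heart of the argument.
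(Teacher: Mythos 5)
The paper offers no proof of Lemma~2.1 at all---it is imported from the literature (\zitaa{MR1462266}{\cthm{2.5}} and \zitaa{MR1160966}{\cthm{9}}), so your argument can only be measured against those sources, and it is in fact the standard mechanism behind them: factor the weight as \(W^{1/\alpha}W^{1/\da}=W\), transport \(\LaM\) isometrically via \(U_\alpha f=W^{1/\alpha}f\) onto the range \(L^\alpha_P\) of the contractive idempotent \(h\mapsto Ph\) inside the classical space \(L^\alpha(\tau,\Cq)\), and cut the classical \(L^\alpha\)--\(L^\da\) duality down to that subspace using the pointwise orthogonal splitting of the pairing. The individual steps check out: \(U_\alpha\) is well defined on \tMec{es} because \(W^{1/\alpha}P=W^{1/\alpha}\); surjectivity onto \(L^\alpha_P\) follows from \(h\mapsto\rk{W^{1/\alpha}}^\mpi h\); Hahn--Banach extension followed by \(k\mapsto Pk\) gives an isometric representation of functionals on \(L^\alpha_P\) (your two inequalities \(\no{Pk}_\da\leq\no{k}_\da=\noa{\ell}\) and \(\noa{\ell}\leq\no{Pk}_\da\) do combine to equality); uniqueness of \(k\in L^\da_P\) can be confirmed by testing against \(h\defeq\no{k}^{\da-2}k\), which lies in \(L^\alpha_P\) since \(\rk{\da-1}\alpha=\da\); and the final computation \(k^\ad W^{1/\alpha}f=g^\ad Wf\) is correct because \(W^{1/\da}g=Pk=k\) and \(W^{1/\da}W^{1/\alpha}=W\).

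One justification you give is wrong, although the proof survives: the Moore--Penrose inverse is \emph{not} continuous on \(\Mggq\)---it jumps wherever the rank drops, as the example \(X_\epsilon\defeq\mathrm{diag}(\epsilon,0)\) shows, where \(X_\epsilon^\mpi=\mathrm{diag}(\epsilon^\inv,0)\) diverges as \(\epsilon\downarrow0\) while \(X_0^\mpi=\Oe\). What your ``technical heart'' actually requires is only Borel measurability of \(X\mapsto X^\mpi\) on \(\Mggq\), and this holds because \(X^\mpi=\lim_{\epsilon\downarrow0}\rk{X+\epsilon I}^\inv X\rk{X+\epsilon I}^\inv\) pointwise, a limit of continuous maps; likewise \(P=\lim_{\epsilon\downarrow0}\rk{X+\epsilon I}^\inv X\) is measurable, whereas \(X\mapsto X^t\) for \(t\in(0,1)\) is genuinely continuous on \(\Mggq\), so that part of your appeal to continuity is fine. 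Replace the continuity claim for the pseudoinverse by this limit argument and your proof is complete and self-contained---arguably a useful addition, given that the paper itself delegates the statement entirely to the references.
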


 Let \(M_1\) and \(M_2\) be \taval{\Mggq} measures on \(\A\) such that \(M=M_1+M_2\) and \(M_1(A)=M_2(\Omega\setminus A)=\Oe\) for some set \(A\in\A\).
 Let \(\ind{C}\) denote the indicator function of a set \(C\).
 Identifying \(\LaMa\) with the space \(\indOA \LaM=\setaa{\indOA f}{f\in\LaM}\) and \(\LaMb\) with \(\indA \LaM\), one obtains a direct sum decomposition \(\LaM=\LaMa\dotplus\LaMb\).
 For a linear subset \(\Lc\) of \(\LaM\), denote by \(\Lca\) its closure in \(\LaM\)

\begin{lem}\label{L2.2}
 If
 \begin{equation}\label{e2.1}
  \LaMb
  \subseteq\Lca,
 \end{equation}
 then
 \begin{equation}\label{e2.2}
  \cl{\rk{\indA \Lc}}
  =\LaMb
 \end{equation}
 and
 \begin{equation}\label{e2.3}
  \Lca
  =\cl{\rk{\indOA \Lc}}\dotplus\cl{\rk{\indA \Lc}}
  =\cl{\rk{\indOA \Lc}}\dotplus\LaMb.
 \end{equation}
\end{lem}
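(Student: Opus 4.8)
The plan is to exploit that multiplication by the scalar indicators \(\indOA\) and \(\indA\) defines a pair of complementary bounded linear projections on \(\LaM\) whose ranges are precisely the closed subspaces \(\LaMa=\indOA\LaM\) and \(\LaMb=\indA\LaM\). First I would record that these operators are contractions: since \(\indA\) is scalar-valued it pulls out of the euclidean norm, so \(\noa{\indA f}^\alpha=\int_A\no{\rk{\dif M/\dif\tau}^{1/\alpha}f}^\alpha\dif\tau\le\noa{f}^\alpha\), and likewise \(\noa{\indOA f}\le\noa{f}\). In particular both maps are continuous, hence each carries the closure of any subset of \(\LaM\) into the closure of its image. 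This continuity is the only analytic input the argument needs; everything else is formal manipulation of the pointwise identity \(f=\indOA f+\indA f\).

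To prove \eqref{e2.2}, note that \(\indA\Lc\subseteq\indA\LaM=\LaMb\) and that \(\LaMb\) is closed, which gives \(\cl{\rk{\indA\Lc}}\subseteq\LaMb\). For the reverse inclusion I would apply the continuous map \(\indA\) to the hypothesis \eqref{e2.1}. Since \(\indA\) acts as the identity on \(\LaMb\) (every \(h\in\LaMb\) has the form \(h=\indA f\), whence \(\indA h=h\)), this yields \(\LaMb=\indA\LaMb\subseteq\indA\Lca\subseteq\cl{\rk{\indA\Lc}}\); combined with the first inclusion this proves \eqref{e2.2}. Substituting \(\cl{\rk{\indA\Lc}}=\LaMb\) then gives at once the second equality in \eqref{e2.3}.

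For the first equality in \eqref{e2.3} I would argue the two inclusions separately. The inclusion \(\subseteq\) is obtained by decomposing an arbitrary \(f\in\Lca\) as \(f=\indOA f+\indA f\); continuity of the two projections gives \(\indOA f\in\cl{\rk{\indOA\Lc}}\subseteq\LaMa\) and \(\indA f\in\cl{\rk{\indA\Lc}}\subseteq\LaMb\), and because the ambient decomposition \(\LaM=\LaMa\dotplus\LaMb\) is direct this simultaneously exhibits \(f\) as an element of \(\cl{\rk{\indOA\Lc}}\dotplus\cl{\rk{\indA\Lc}}\) and shows that the latter sum is indeed direct. For the inclusion \(\supseteq\) it suffices to place each summand inside \(\Lca\): the summand \(\cl{\rk{\indA\Lc}}=\LaMb\) lies in \(\Lca\) by the hypothesis \eqref{e2.1}, while for \(f\in\Lc\) the identity \(\indOA f=f-\indA f\) writes \(\indOA f\) as a difference of \(f\in\Lc\subseteq\Lca\) and \(\indA f\in\indA\Lc\subseteq\LaMb\subseteq\Lca\); since \(\Lca\) is a linear subspace this gives \(\indOA\Lc\subseteq\Lca\), and taking closures, \(\cl{\rk{\indOA\Lc}}\subseteq\Lca\).

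The argument is essentially soft, so there is no single hard computational step; the point requiring care is that the hypothesis \eqref{e2.1} is used twice and in an essential way — once, via the projection \(\indA\), to produce the reverse inclusion in \eqref{e2.2}, and once directly to place the \(\LaMb\)-summand inside \(\Lca\) in the \(\supseteq\) part of \eqref{e2.3}. One should also be careful to invoke the directness of \(\LaM=\LaMa\dotplus\LaMb\) in order to legitimately write the sums in \eqref{e2.3} as direct sums rather than as mere algebraic sums, and to note that multiplication by an indicator respects \(M\)-equivalence, so that the projections are well defined on \(\LaM\).
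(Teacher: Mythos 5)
Your proposal is correct and takes essentially the same route as the paper's proof: both rest on the continuity of multiplication by \(\indA\) combined with hypothesis \eqref{e2.1} to obtain \(\LaMb\subseteq\cl{\rk{\indA\Lc}}\), and both derive the reverse inclusion in \eqref{e2.3} from the identity \(\indOA f=f-\indA f\) together with \(\indA f\in\LaMb\subseteq\Lca\). Your explicit verification of the contraction property of the indicator projections and of the directness of the sum merely spells out steps the paper treats as immediate.
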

\begin{proof}
 The continuity of the map \(f\mapsto\indA f\), \(f\in\LaM\), and condition \eqref{e2.1} yield \(\LaMb=\indA\LaM\subseteq\indA\Lca\subseteq\cl{\rk{\indA\Lc}}\).
 Since the inclusion \(\cl{\rk{\indA\Lc}}\subseteq\LaMb\) is obvious, equality \eqref{e2.2} follows.
 For the proof of \eqref{e2.3}, note first that \(\Lca\subseteq\cl{\rk{\indOA\Lc}}\dotplus\cl{\rk{\indA\Lc}}\).
 To prove the opposite inclusion, let \(f\in\Lca\).
 By \eqref{e2.2} and \eqref{e2.1}, we have \(\indA f\in\indA\Lca\subseteq\cl{\rk{\indA\Lc}}=\LaMb\subseteq\Lca\), which gives \(\cl{\rk{\indA\Lc}}\subseteq\cl{\rk{\indA\Lca}}\subseteq\Lca\).
 The relation \(\indOA f=f-\indA f\in\Lca\) implies that \(\cl{\rk{\indOA\Lc}}\subseteq\Lca\), hence, \(\cl{\rk{\indOA \Lc}}\dotplus\cl{\rk{\indA \Lc}}\subseteq\Lca\).
\end{proof}

\begin{lem}\label{L2.3}
 Let \(f\in\LaM\) and \(\rho\defeq\inf\setaa{\noa{f-g}}{g\in\Lc}\), \(\rho_1\defeq\inf\setaa{\noa{\indOA(f-g)}}{g\in\Lc}\).
 If \eqref{e2.1} is satisfied, then \(\rho=\rho_1\).
\end{lem}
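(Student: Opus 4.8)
The plan is to prove the two inequalities $\rho_1\le\rho$ and $\rho\le\rho_1$ separately. The structural fact I would establish first is that the $\alpha$-th power of the norm splits across the decomposition $\LaM=\LaMa\dotplus\LaMb$: since the defining integral decomposes over the disjoint pieces $A$ and $\Omega\setminus A$, every $h\in\LaM$ satisfies
\begin{equation*}
 \noa{h}^\alpha=\noa{\indOA h}^\alpha+\noa{\indA h}^\alpha,
\end{equation*}
and in particular $\noa{\indOA h}\le\noa{h}$. This additivity is the engine of the whole argument.

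The inequality $\rho_1\le\rho$ is then immediate: for each $g\in\Lc$ one has $\noa{\indOA(f-g)}\le\noa{f-g}$, and passing to the infimum over $g\in\Lc$ gives $\rho_1\le\rho$.

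For the reverse inequality $\rho\le\rho_1$ I would argue by approximation, and this is where condition \eqref{e2.1} enters decisively. Fix $\epsilon>0$ and choose $g_0\in\Lc$ with $\noa{\indOA(f-g_0)}<\rho_1+\epsilon$. The element $v\defeq\indA(f-g_0)$ lies in $\LaMb$, so by \eqref{e2.1} there is $h\in\Lc$ with $\noa{v-h}<\epsilon$. Setting $g_1\defeq g_0+h\in\Lc$ (using that $\Lc$ is linear), I would control both components of $f-g_1$. On $A$ one has $\indA(f-g_1)=\indA(v-h)$, whence $\noa{\indA(f-g_1)}\le\noa{v-h}<\epsilon$. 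On $\Omega\setminus A$ the key observation is that $v$ vanishes there, so $\noa{\indOA h}=\noa{\indOA(v-h)}\le\noa{v-h}<\epsilon$, and therefore $\noa{\indOA(f-g_1)}\le\noa{\indOA(f-g_0)}+\noa{\indOA h}<\rho_1+2\epsilon$. Feeding these two bounds into the additive decomposition of $\noa{f-g_1}^\alpha$ and letting $\epsilon\to0$ yields $\rho\le\rho_1$.

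The main obstacle — indeed the only subtle point — is recognizing that approximating the purely $A$-supported element $v$ by members of $\Lc$ automatically forces the $\Omega\setminus A$-part of the approximant $h$ to be small, so that correcting the $A$-component of $f-g_0$ does not appreciably disturb its already near-optimal $\Omega\setminus A$-component. Once this is seen, the additivity of $\noa{\cdot}^\alpha$ turns the two small error terms into a bound on $\noa{f-g_1}$ that collapses to $\rho_1$ in the limit.
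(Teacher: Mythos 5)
Your proof is correct, and it takes a genuinely different route from the paper's. The paper routes everything through \rlem{L2.2}: using \eqref{e2.2} it picks \(g_2\in\Lc\) with \(\noa{\indA(f-g_2)}<\epsilon\), takes a near-optimal \(g_1\) for \(\rho_1\), and splices them into \(\indOA g_1+\indA g_2\), which by \rlem{L2.2} lies in \(\Lca\) — not in \(\Lc\) — so the bound \(\rho\leq\noa{f-(\indOA g_1+\indA g_2)}\) implicitly invokes the standard fact that the distance from \(f\) to \(\Lc\) equals its distance to \(\Lca\); the triangle inequality then gives \(\rho<\rho_1+2\epsilon\). You never leave \(\Lc\): you correct a single near-optimal \(g_0\) by adding a genuine element \(h\in\Lc\) approximating the defect \(v=\indA(f-g_0)\in\LaMb\), which uses hypothesis \eqref{e2.1} directly rather than through \rlem{L2.2}. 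Your decisive extra observation — that \(\noa{\indOA h}=\noa{\indOA(v-h)}\leq\noa{v-h}<\epsilon\) because \(v\) vanishes off \(A\), so the correction barely disturbs the already near-optimal component on \(\Omega\setminus A\) — is exactly what replaces the paper's splicing, and it makes the argument self-contained: neither \eqref{e2.2}, nor \eqref{e2.3}, nor the closure-distance fact is needed (and since \rthm{T4.1} is deduced from \rlem{L2.3} alone, your proof would even let one bypass \rlem{L2.2} in that chain). One small economy: the \(\alpha\)-power additivity \(\noa{h}^\alpha=\noa{\indOA h}^\alpha+\noa{\indA h}^\alpha\), while correct, is stronger than you need — the contractions \(\noa{\indOA h}\leq\noa{h}\), \(\noa{\indA h}\leq\noa{h}\) together with the plain triangle inequality \(\noa{f-g_1}\leq\noa{\indOA(f-g_1)}+\noa{\indA(f-g_1)}<\rho_1+3\epsilon\) close the argument without any limit computation, which is essentially how the paper finishes.
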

\begin{proof}
 For \(\epsilon>0\), there exist \(g_1,g_2\in\Lc\) such that \(\noa{\indOA(f-g_1)}<\rho_1+\epsilon\) and \(\noa{\indA(f-g_2)}<\epsilon\) according to \eqref{e2.2}.
 By \rlem{L2.2}, \(\indOA g_1+\indA g_2\in\Lca\).
 Therefore, \(\rho\leq\noa{f-\rk{\indOA g_1+\indA g_2}}\leq\noa{\indOA(f-g_1)}+\noa{\indA(f-g_2)}<\rho_1+2\epsilon\), hence, \(\rho\leq\rho_1\).
 Since the inequality \(\rho_1\leq\rho\) is trivial, the result follows.
\end{proof}

\section{Regular Borel measures on locally compact abelian groups}\label{3}
 Let \(\Gamma\) be a locally compact abelian group, \(\BsaG\) the \tsa{} of Borel sets of \(\Gamma\), and \(\lambda\) a Haar measure on \(\BsaG\).
 We recall the definition and some elementary properties of regular measures, however, we mention that we do not know an example of a non-regular finite Borel measure on a locally compact abelian group.
 Therefore, some of the following assertions might be redundant.

 A finite non-negative measure \(\mu\) on \(\BsaG\) is called
\begin{itemize}
 \item[a)] \emph{outer regular} if \(\mu(B)=\inf\setaa{\mu(U)}{U\text{ is open and }B\subseteq U\subseteq\Gamma}\) for all \(B\in\BsaG\),
 \item[b)] \emph{inner regular} if \(\mu(B)=\sup\setaa{\mu(K)}{K\text{ is compact and }K\subseteq B}\) for all \(B\in\BsaG\),
 \item[c)] \emph{regular} if it is outer regular and inner regular,
\end{itemize}
\tcf{}~\zitaa{MR578344}{\cpage{206}} and~\zitaa{MR0156915}{(11.34)}.

\begin{lem}\label{L3.1}
 \begin{aeqi}{0}
  \item\label{L3.1.i} Let \(\mu\) be a finite non-negative measure on \(\BsaG\).
 If it is inner regular, then it is outer regular.
  \item\label{L3.1.ii} Any positive linear combination of regular finite non-negative measures is regular.
  \item\label{L3.1.iii} Let \(\mu\) and \(\nu\) be finite non-negative measures on \(\BsaG\).
 If \(\mu\) is regular and \(\nu\) is absolutely continuous with respect to \(\mu\), then \(\nu\) is regular.
 \end{aeqi}
\end{lem}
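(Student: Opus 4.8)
The plan is to prove (i) first and then reduce the remaining two assertions to their inner-regularity halves, since (i) supplies outer regularity automatically. The point of (i) is a complementation trick that works precisely because \(\mu\) is finite: outer approximation of \(B\) by open sets is dual to inner approximation of \(\Gamma\setminus B\) by compact sets. Here I rely on the standing convention that a locally compact (abelian) group is Hausdorff, so that every compact subset of \(\Gamma\) is closed and its complement is open.

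For (i), fix \(B\in\BsaG\) and \(\epsilon>0\). Applying inner regularity to \(\Gamma\setminus B\), I get a compact \(K\subseteq\Gamma\setminus B\) with \(\mu((\Gamma\setminus B)\setminus K)=\mu(\Gamma\setminus B)-\mu(K)<\epsilon\). Setting \(U\defeq\Gamma\setminus K\), the set \(U\) is open, \(B\subseteq U\) since \(K\cap B=\emptyset\), and, because \(\mu\) is finite and \(B\subseteq U\), one has \(\mu(U)-\mu(B)=\mu(U\setminus B)=\mu((\Gamma\setminus B)\setminus K)<\epsilon\). As \(\epsilon\) is arbitrary, this is exactly outer regularity.

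For (ii), write \(\mu=\sum_{i=1}^{n}c_i\mu_i\) with \(c_i>0\) and each \(\mu_i\) regular; \(\mu\) is then a finite non-negative measure, so by (i) it suffices to check inner regularity. Given \(B\) and \(\epsilon>0\), I pick for each \(i\) a compact \(K_i\subseteq B\) with \(\mu_i(B)-\mu_i(K_i)<\epsilon\) and take \(K\defeq\bigcup_{i=1}^{n}K_i\), which is compact, contained in \(B\), and satisfies \(\mu_i(B)-\mu_i(K)<\epsilon\) for all \(i\); summing gives \(\mu(B)-\mu(K)<(\sum_{i=1}^{n}c_i)\epsilon\), and letting \(\epsilon\to0\) yields inner regularity.

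For (iii), I would first invoke the standard fact that, since \(\nu\) is finite and \(\nu\ll\mu\), absolute continuity is uniform: for every \(\epsilon>0\) there is \(\delta>0\) with \(\nu(E)<\epsilon\) whenever \(\mu(E)<\delta\). By (i) it again suffices to prove inner regularity of \(\nu\). Given \(B\) and \(\epsilon>0\), choose the associated \(\delta\) and use inner regularity of \(\mu\) to find a compact \(K\subseteq B\) with \(\mu(B\setminus K)<\delta\); then \(\nu(B)-\nu(K)=\nu(B\setminus K)<\epsilon\). The only step that is more than bookkeeping is (i), where finiteness of \(\mu\) and the Hausdorff property of \(\Gamma\) are both genuinely used; the uniform-continuity fact needed in (iii) is classical and follows, for instance, from a Borel--Cantelli argument or from absolute continuity of the Lebesgue integral of the Radon--Nikodym derivative \(\dif\nu/\dif\mu\in L^1(\mu)\).
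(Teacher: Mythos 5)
Your proof is correct and takes essentially the same route as the paper: the identical complementation trick for (i) (using that compact sets in the Hausdorff space \(\Gamma\) are closed), and for (iii) the same classical uniform absolute-continuity fact that the paper cites. The only difference is one of detail, not substance: the paper dismisses (ii) as clear and (iii) as immediate, while you spell both out by reducing them to inner regularity and invoking (i).
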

\begin{proof}
 To prove~\ref{L3.1.i} let \(B\in\BsaG\) and \(\co{B}\defeq\Gamma\setminus B\).
 If \(\mu\) is inner regular, for \(\epsilon>0\) there exists a compact set \(K\subseteq\co{B}\) such that \(\mu(\co{B}\setminus K)<\epsilon\).
 Since \(\Gamma\) is a Hausdorff space, the compact set \(K\) is closed, hence, the set \(\co{K}\) is open, and it satisfies \(B\subseteq\co{K}\) and \(\mu(\co{K}\setminus B)=\mu(\co{B}\setminus K)<\epsilon\), which implies that \(\mu\) is outer regular.
 Assertion~\ref{L3.1.ii} is clear and assertion~\ref{L3.1.iii} is an immediate consequence of the following fact.
 If \(\nu\) is absolutely continuous with respect to \(\mu\), for \(\epsilon>0\) there exists \(\delta>0\) such that for all \(B\in\BsaG\), the inequality \(\mu(B)<\delta\) yields \(\nu(B)<\epsilon\), \tcf{}~\zitaa{MR578344}{\clem{4.2.1}}.
\end{proof}

 A \taval{\C} measure \(\mu\) on \(\BsaG\) is called \emph{regular} if its variation \(\var{\mu}\) is regular.
 It is called \emph{absolutely continuous} or \emph{singular}, \tresp{}, if its variation is absolutely  continuous or singular with respect to \(\lambda\).

\begin{lem}\label{L3.2}
 Let \(\mu\) be an absolutely continuous and regular \taval{\C} measure on \(\BsaG\).
 Then there exists a Radon-Nikodym derivative of \(\mu\) with respect to \(\lambda\).
\end{lem}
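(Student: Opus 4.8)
The plan is to reduce the statement to the classical Radon--Nikodym theorem. Its usual hypothesis---\(\sigma\)-finiteness of the dominating measure---can fail here, because a Haar measure \(\lambda\) on a locally compact abelian group need not be \(\sigma\)-finite. The whole point of the lemma is that regularity lets us confine the problem to a \(\sigma\)-finite set on which \(\mu\) lives.

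First I would pass to the variation \(\var{\mu}\), which by the definition of regularity and of absolute continuity for \taval{\C} measures is a finite, regular, non-negative measure that is absolutely continuous with respect to \(\lambda\). Invoking inner regularity of \(\var{\mu}\), for each \(n\in\N\) I pick a compact set \(K_n\subseteq\Gamma\) with \(\var{\mu}(\co{K_n})<1/n\) and put \(K\defeq\bigcup_{n\in\N}K_n\). Then \(K\) is \(\sigma\)-compact, \(\var{\mu}(\co{K})=\Oe\), and hence both \(\var{\mu}\) and \(\mu\) are concentrated on \(K\), \tie{}, \(\mu(B)=\mu(B\cap K)\) for all \(B\in\BsaG\).

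The second ingredient is that a Haar measure is finite on compact sets, so \(\lambda(K_n)<\infty\) for every \(n\), and the restriction \(\lambda_K\defeq\ind{K}\lambda\), defined by \(\lambda_K(B)\defeq\lambda(B\cap K)\), is therefore \(\sigma\)-finite. I then check \(\mu\ll\lambda_K\): if \(\lambda_K(B)=\Oe\), then \(\lambda(B\cap K)=\Oe\), whence \(\var{\mu}(B\cap K)=\Oe\) by absolute continuity, and since \(\var{\mu}\) is concentrated on \(K\) this gives \(\var{\mu}(B)=\Oe\), so \(\mu(B)=\Oe\).

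Now the classical Radon--Nikodym theorem applies to the finite \taval{\C} measure \(\mu\) and the \(\sigma\)-finite measure \(\lambda_K\) (splitting \(\mu\) into real and imaginary parts and these into their Jordan components, should one wish to quote only the signed version), producing a measurable function \(f\) with \(\mu(B)=\int_B f\dif\lambda_K\) for all \(B\in\BsaG\). Setting \(g\defeq\ind{K}f\) and extending by zero off \(K\) yields \(\int_B g\dif\lambda=\int_{B\cap K}f\dif\lambda=\int_B f\dif\lambda_K=\mu(B)\) for every \(B\in\BsaG\), so \(g\) is the sought Radon--Nikodym derivative of \(\mu\) with respect to \(\lambda\). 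The only genuine obstacle is the possible non-\(\sigma\)-finiteness of \(\lambda\); inner regularity of \(\var{\mu}\) together with finiteness of Haar measure on compacta is precisely what overcomes it.
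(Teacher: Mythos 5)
Your proof is correct and takes essentially the same route as the paper's: inner regularity of \(\var{\mu}\) produces compact sets \(K_n\) whose union is a carrier of \(\mu\) that is \(\sigma\)-finite with respect to \(\lambda\), after which the classical Radon--Nikodym theorem applies. The additional details you spell out (the restricted measure \(\lambda_K\), the check \(\mu\ll\lambda_K\), extension of the density by zero off \(K\)) are precisely the steps the paper leaves implicit in its appeal to the Radon--Nikodym theorem.
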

\begin{proof}
 Since \(\mu\) is assumed to be regular, there exists a sequence \(\gk{K_n}_{n\in\N}\) of compact sets such that \(\var{\mu}(\Gamma\setminus K_n)<1/n\), \(n\in\N\).
 If \(B\defeq\bigcup_{n=1}^\infty K_n\), then \(B\in\BsaG\), \(\var{\mu}(\Gamma\setminus B)=0\), and from \(\lambda(K_n)<\infty\), \(n\in\N\), we get that \(B\) is a set of \tsf{} \tam{\lambda}.
 Thus, the assertion follows from the Radon-Nikodym theorem, \tcf{}~\zitaa{MR578344}{\cthm{4.2.3}}.
\end{proof}

 By definition, an \taval{\Mggq} measure \(M\) on \(\BsaG\) is \emph{regular} if all its entries are regular.
 Let \(\mu_{jk}\) be the measure at place \((j,k)\).
 Since for all \(B\in\BsaG\), \(\abs{\mu_{jk}(B)}\leq\ek{\mu_{jj}(B)+\mu_{kk}(B)}/2\), hence \(\var{\mu_{jk}}(B)\leq\ek{\mu_{jj}(B)+\mu_{kk}(B)}/2\), from \rlem{L3.1} one can conclude that \(M\) is regular if and only if each measure on the principal diagonal is regular.
 Taking into account \rlemp{L3.1}{L3.1.iii} and \rlem{L3.2}, we arrive at the following result.

\begin{lem}\label{L3.3}
 If \(\ac{M}\) is the absolutely continuous part of a regular \taval{\Mggq} measure \(M\) on \(\BsaG\), then there exists the Radon-Nikodym derivative \(\dif\ac{M}/\dif\lambda\eqdef W\) of \(\ac{M}\) with respect to \(\lambda\).
\end{lem}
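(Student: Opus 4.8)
The plan is to reduce the assertion to the scalar case treated in \rlem{L3.2} and then reassemble the matrix. Since \(M\) is finite, its trace measure \(\tau\defeq\sum_{j=1}^q\mu_{jj}\) is a finite non-negative Borel measure, so the Lebesgue decomposition of \(\tau\) with respect to \(\lambda\) yields a Borel set \(N\) with \(\lambda(N)=0\) on which the \(\lambda\)-singular part of \(\tau\) is concentrated. One then has \(\ac{M}(B)=M(B\setminus N)\), which is again \(\Mggq\)-valued and absolutely continuous with respect to \(\lambda\).

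The first, and main, step is to identify the diagonal entries of \(\ac{M}\). From \(\var{\mu_{jk}}(B)\leq\ek{\mu_{jj}(B)+\mu_{kk}(B)}/2\leq\tau(B)\) for all \(B\in\BsaG\) one gets \(\var{\mu_{jk}}\ll\tau\), whence \(\mu_{jk}(\cdot\setminus N)\) is precisely the absolutely continuous part of \(\mu_{jk}\) with respect to \(\lambda\). In particular, the \((j,j)\)-entry of \(\ac{M}\) equals the absolutely continuous part of \(\mu_{jj}\), and this part is absolutely continuous with respect to \(\mu_{jj}\) itself.

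Next I would transfer regularity from \(M\) to \(\ac{M}\). As a diagonal entry of the regular measure \(M\), each \(\mu_{jj}\) is regular; since the \((j,j)\)-entry of \(\ac{M}\) is absolutely continuous with respect to \(\mu_{jj}\), \rlemp{L3.1}{L3.1.iii} shows it is regular. By the criterion established before the lemma (a \(\Mggq\)-valued measure is regular if and only if all its diagonal entries are), \(\ac{M}\) is regular, so each of its entries is a regular \(\C\)-valued measure which is moreover absolutely continuous with respect to \(\lambda\).

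It then remains to invoke \rlem{L3.2} entrywise: each entry of \(\ac{M}\), being regular and absolutely continuous, admits a Radon-Nikodym derivative with respect to \(\lambda\), and assembling these \(q^2\) derivatives into a single matrix-valued function gives \(W=\dif\ac{M}/\dif\lambda\) (taking values in \(\Mggq\) \(\lambda\)-almost everywhere by positive semidefiniteness of \(\ac{M}\)). The only genuinely delicate point is the identification in the first two paragraphs, \tie{}, checking that the matrix-valued absolutely continuous part really carries the scalar absolutely continuous parts of the \(\mu_{jj}\) on its diagonal; once this is in place, \rlemp{L3.1}{L3.1.iii} and \rlem{L3.2} finish the argument almost mechanically.
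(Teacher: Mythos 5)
Your proof is correct and takes essentially the same route as the paper: regularity of \(M\) is reduced to its diagonal entries via \(\var{\mu_{jk}}(B)\leq\ek{\mu_{jj}(B)+\mu_{kk}(B)}/2\), regularity is transferred to the entries of \(\ac{M}\) by \rlemp{L3.1}{L3.1.iii}, and \rlem{L3.2} is then applied entrywise. Your explicit construction of the Lebesgue decomposition through the trace measure \(\tau=\sum_{j=1}^q\mu_{jj}\) and a common \(\lambda\)-null carrier \(N\) only fills in a step the paper leaves implicit, so nothing essential differs.
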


\section{A general trigonometric approximation problem in $\LaM$}\label{4}
 For \(k\in\mn{1}{q}\) denote by \(\eu{k}\) the \tth{k} vector of the standard orthonormal basis of \(\Cq\).
 Let \(G\) be a locally compact abelian group and \(\Gamma\) its dual.
 The value of \(\gamma\in\Gamma\) at \(x\in G\) is denoted by \(\inner{\gamma}{x}\).
 For \(x\in G\), define a function \(\chu{x}\) by \(\chua{x}{\gamma}\defeq\inner{\gamma}{x}\), \(\gamma\in\Gamma\), and set \(\chuu{x}{k}\defeq\chu{x}\eu{k}\).

 Let \(S\) be a nonvoid subset of \(G\).
 If \(\GsS\) is not empty, denote by \(\T{\GsS}\) the linear space of all \taval{\Cq} trigonometric polynomials on \(\Gamma\) with frequencies from \(\GsS\), \tie{}, \(t\in\T{\GsS}\) if and only if it has the form \(t=\sum_{j=1}^n\chu{x_j}u_j\), \(x_j\in\GsS\), \(u_j\in\Cq\), \(j\in\mn{1}{n}\), \(n\in\N\).
 If \(\GsS\) is empty, let \(\T{\GsS}\) be the space consisting of the zero function on \(\Gamma\).

 Let \(M\) be a regular \taval{\Mggq} measure on \(\BsaG\).
 Motivated by problems of prediction theory we are interested in computing the distance
\begin{align*}
 d&\defeq\inf\setaa*{\noa*{\chuu{s}{k}-t}}{t\in\T{\GsS}},&s&\in S,&k&\in\mn{1}{q}.
\end{align*}

\begin{thm}\label{T4.1}
 Let \(M=\ac{M}+\si{M}\) be the Lebesgue decomposition of \(M\) into its absolutely continuous part \(\ac{M}\) and singular part \(\si{M}\).
 If
 \begin{equation}\label{T4.1.1}
  \Loa{\alpha}{\si{M}}
  \subseteq\cl{\ek*{\T{\GsS}}},
 \end{equation}
 then for \(s\in S\), \(k\in\mn{1}{q}\),
 \[
  d
  =\inf\setaa*{\ek*{\int\no*{\rk*{\frac{\dif\ac{M}}{\dif\lambda}}^{1/\alpha}\rk{\chuu{s}{k}-t}}^\alpha\dif\lambda}^{1/\alpha}}{t\in\T{\GsS}},
 \]
 where here and in what follows the domain of integration is \(\Gamma\) if it is not indicated explicitly.
\end{thm}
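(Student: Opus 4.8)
The plan is to transfer the problem into the abstract framework of \rsec{2} and then to invoke \rlem{L2.3}. Because \(\ac{M}\) and \(\si{M}\) are mutually singular, the Lebesgue decomposition provides a set \(A\in\BsaG\) with \(\ac{M}(A)=\si{M}(\Gamma\setminus A)=\Oe\). Taking \(\Omega\defeq\Gamma\), \(\A\defeq\BsaG\), \(M_1\defeq\ac{M}\), and \(M_2\defeq\si{M}\) puts us precisely in the situation treated before \rlem{L2.2}: we have the identifications \(\Loa{\alpha}{\ac{M}}\cong\ind{\Gamma\setminus A}\LaM\) and \(\Loa{\alpha}{\si{M}}\cong\ind{A}\LaM\), together with the direct sum decomposition \(\LaM=\Loa{\alpha}{\ac{M}}\dotplus\Loa{\alpha}{\si{M}}\). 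Under this identification the standing hypothesis \eqref{T4.1.1} is literally condition \eqref{e2.1} for the linear subset \(\Lc\defeq\T{\GsS}\), since \(\cl{\ek{\T{\GsS}}}=\cl{\T{\GsS}}\).

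Now fix \(s\in S\) and \(k\in\mn{1}{q}\). As \(M\) is finite and \(\abs{\chua{s}{\gamma}}=1\) for every \(\gamma\in\Gamma\), the function \(\chuu{s}{k}\) lies in \(\LaM\), and every \(t\in\T{\GsS}\) does as well. Thus \rlem{L2.3}, applied with \(f\defeq\chuu{s}{k}\) and \(\Lc=\T{\GsS}\) and using that \eqref{e2.1} holds, yields
\[
 d
 =\inf\setaa*{\noa*{\chuu{s}{k}-t}}{t\in\T{\GsS}}
 =\inf\setaa*{\noa*{\ind{\Gamma\setminus A}\rk{\chuu{s}{k}-t}}}{t\in\T{\GsS}}.
\]
It therefore only remains to express \(\noa{\ind{\Gamma\setminus A}h}\), for \(h\in\LaM\), as an integral against the Haar measure \(\lambda\).

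To do this, I would pick a \tsf{} measure \(\tau\) with \(M\ll\tau\). Since \(M\) and \(\ac{M}\) coincide on subsets of \(\Gamma\setminus A\) (because \(\si{M}\) carries no mass there), their densities satisfy \(\rk{\dif M/\dif\tau}=\rk{\dif\ac{M}/\dif\tau}\) \tae{\tau} on \(\Gamma\setminus A\); moreover \(\rk{\dif\ac{M}/\dif\tau}=\Oe\) \tae{\tau} on \(A\) because \(\ac{M}(A)=\Oe\). Writing out \(\noa{\ind{\Gamma\setminus A}h}\) with the density \(\rk{\dif M/\dif\tau}\) and using these two facts shows that
\[
 \noa*{\ind{\Gamma\setminus A}h}
 =\ek*{\int\no*{\rk*{\frac{\dif\ac{M}}{\dif\tau}}^{1/\alpha}h}^\alpha\dif\tau}^{1/\alpha},
\]
that is, \(\noa{\ind{\Gamma\setminus A}h}\) is exactly the norm of \(h\) in \(\Loa{\alpha}{\ac{M}}\). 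As this norm does not depend on the chosen dominating measure, I would finally replace \(\tau\) by \(\lambda\): by \rlem{L3.3} the derivative \(W=\dif\ac{M}/\dif\lambda\) exists, whence \(\noa{\ind{\Gamma\setminus A}h}=\ek{\int\no{W^{1/\alpha}h}^\alpha\dif\lambda}^{1/\alpha}\). Taking \(h=\chuu{s}{k}-t\) and passing to the infimum over \(t\in\T{\GsS}\) then gives the claimed formula. The one genuinely delicate step is this last change of dominating measure, because \(\lambda\) need not be \tsf{} on all of \(\Gamma\); it is justified by the fact, contained in the proof of \rlem{L3.2}, that \(\ac{M}\) is carried by a set of \tsf{} \tam{\lambda} off which \(W\) vanishes \tae{\lambda}, so that the integral over \(\Gamma\) is really an integral over that carrier.
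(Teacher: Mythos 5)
Your proposal is correct and takes essentially the same route as the paper, which proves \rthm{T4.1} by simply noting that it follows immediately from \rlemss{L2.3}{L3.3}. Your write-up is exactly that deduction with the implicit details made explicit: the identification of the Lebesgue decomposition with the direct-sum setting of \rsec{2}, the application of \rlem{L2.3} to \(f=\chuu{s}{k}\) with \(\Lc=\T{\GsS}\), and the justification (via the \(\sigma\)-finite carrier from the proof of \rlem{L3.2}) for passing from a dominating measure \(\tau\) to the Haar measure \(\lambda\).
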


 The preceding theorem immediately follows from \rlemss{L2.3}{L3.3} and it shows that under condition \eqref{T4.1.1} it is enough to solve the approximation problem for the absolutely continuous part of \(M\).
 The next section deals with its partial solution and the rest of the present section is devoted to subsets of \(G\) with the property that \eqref{T4.1.1} is satisfied for each regular \taval{\Mggq} measure.

 For a regular \taval{\Cq} measure \(\mu\) on \(\BsaG\), denote by \(\fsinv{\mu}\) its inverse Fourier-Stieltjes transform, \tie{}\ \(\fsinv{\mu}(x)\defeq\int\inner{\gamma}{x}\mu(\dif\gamma)=\int\chu{x}\dif\mu\), \(x\in G\).
 Let \(\LlCq\) be the Banach space of \tam{\BsaG}able \taval{\Cq} functions integrable with respect to \(\lambda\).
 If \(f\in\LlCq\), the symbol \(\fsinv{f}\) stands for its inverse Fourier transform.

\begin{defn}\label{D4.2}
 A subset \(S\) of \(G\) is called an \emph{\tACs{}}, if for all regular \taval{\Cq} measures \(\mu\) on \(\BsaG\), the equality \(\fsinv{\mu}(x)=0\) for all \(x\in S\) implies that \(\mu\) is absolutely continuous.
\end{defn}

\begin{lem}\label{L4.3}
 If \(\GsS\) is an \tACs{}, then \eqref{T4.1.1} is satisfied.
\end{lem}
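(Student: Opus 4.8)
The plan is to establish \eqref{T4.1.1} by the Hahn--Banach duality principle. Since \(\Loa{\alpha}{\si{M}}\) and \(\cl{\ek{\T{\GsS}}}\) both lie in the Banach space \(\LaM\), it suffices to show that every bounded linear functional \(\ell\) on \(\LaM\) that annihilates \(\T{\GsS}\) also annihilates \(\Loa{\alpha}{\si{M}}\); the inclusion \(\Loa{\alpha}{\si{M}}\subseteq\cl{\ek{\T{\GsS}}}\) then follows. To work with concrete objects I would fix once and for all the trace measure \(\tau\defeq\sum_{j=1}^q\mu_{jj}\), which dominates \(M\) and is regular by \rlemp{L3.1}{L3.1.ii} (each diagonal measure being regular), and represent \(\ell\) via \rlem{L2.1}: there is \(g\in\LdaM\), \(\da\defeq\alpha/\rk{\alpha-1}\), with \(\ell(f)=\int g^\ad\rk{\dif M/\dif\tau}f\dif\tau\) for all \(f\in\LaM\).

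The key construction is an auxiliary \taval{\Cq} measure built from \(g\). Put \(h\defeq\rk{\dif M/\dif\tau}g\); a routine Hölder estimate with conjugate exponents \(\alpha\) and \(\da\) (using that \(\operatorname{tr}\rk{\dif M/\dif\tau}\) is \(\tau\)-integrable with total mass \(\operatorname{tr}M(\Gamma)\)) shows \(\int\no{h}\dif\tau<\infty\). Let \(\nu\) be the \taval{\Cq} measure with \(\tau\)-density equal to the entrywise conjugate \(\overline{h}\), \tie{}\ \(\nu(B)\defeq\int_B\overline{h}\dif\tau\). Then \(\var{\nu}\) has \(\tau\)-density \(\no{h}\), so \(\var{\nu}\) is absolutely continuous with respect to the regular measure \(\tau\) and hence regular by \rlemp{L3.1}{L3.1.iii}; thus \(\nu\) is a regular \taval{\Cq} measure, which is exactly what is needed to invoke the hypothesis on \(\GsS\).

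Next I would translate the annihilation condition into a statement about \(\fsinv{\nu}\). For \(x\in\GsS\) and \(u\in\Cq\), using that \(\rk{\dif M/\dif\tau}\) is \tH{} \tae{\tau} and hence \(g^\ad\rk{\dif M/\dif\tau}=h^\ad\),
\[
 0=\ell(\chu{x}u)=\int\chu{x}\,h^\ad u\dif\tau=\sum_{j=1}^q u_j\int\chu{x}\,\overline{h_j}\dif\tau.
\]
As this vanishes for every \(u\in\Cq\), each component of \(\fsinv{\nu}(x)=\int\chu{x}\overline{h}\dif\tau\) is zero, \tie{}\ \(\fsinv{\nu}(x)=\Oe\) for all \(x\in\GsS\). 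Because \(\GsS\) is an \tACs{}, this forces \(\nu\) to be absolutely continuous with respect to \(\lambda\). Now pick \(A\in\BsaG\) with \(\lambda(A)=0\) on which \(\si{M}\) is concentrated, so that \(\Loa{\alpha}{\si{M}}\) is identified with \(\indA\LaM\). From \(\var{\nu}\ll\lambda\) and \(\lambda(A)=0\) we obtain \(\int_A\no{h}\dif\tau=\var{\nu}(A)=0\), whence \(h=\Oe\) \tae{\tau} on \(A\). Consequently, for every \(f\in\Loa{\alpha}{\si{M}}=\indA\LaM\),
\[
 \ell(f)=\int_A g^\ad\rk{\dif M/\dif\tau}f\dif\tau=\int_A h^\ad f\dif\tau=0,
\]
so \(\ell\) annihilates \(\Loa{\alpha}{\si{M}}\) and the Hahn--Banach argument yields \eqref{T4.1.1}.

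The main obstacle, and the place where the regularity theory of \rsec{3} is really used, is guaranteeing that the constructed measure \(\nu\) is regular so that the defining property of an \tACs{} applies; this is precisely why I pass to the regular trace measure \(\tau\) rather than an arbitrary dominating \tsf{} measure and then appeal to \rlemp{L3.1}{L3.1.iii}. Two smaller technical points must be handled with care: the integrability \(\int\no{h}\dif\tau<\infty\) (secured by the Hölder estimate, bounding the spectral norm of \(\rk{\dif M/\dif\tau}^{1/\alpha}\) to the power \(\alpha\) by \(\operatorname{tr}\rk{\dif M/\dif\tau}\)), and the complex conjugation in the density of \(\nu\) — taking \(\overline{h}\) rather than \(h\) is what makes \(\fsinv{\nu}\) vanish exactly on \(\GsS\) and not on its reflection, so that the \tACs{} hypothesis can be applied directly.
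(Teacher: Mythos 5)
Your proof is correct and takes essentially the same route as the paper: a Hahn--Banach duality argument combined with the representation of functionals from \rlem{L2.1}, the observation that the annihilation of \(\T{\GsS}\) forces the auxiliary measure \(\rk{\dif M/\dif\tau}g\dif\tau\) (up to entrywise conjugation) to be absolutely continuous by the \tACs{} hypothesis, and the conclusion that the functional then vanishes on \(\Loa{\alpha}{\si{M}}\). The points you elaborate --- choosing the regular trace measure as \(\tau\) so that \rlemp{L3.1}{L3.1.iii} yields regularity of the auxiliary measure, the H\"older estimate giving its finiteness, and the conjugation needed to apply the \tACs{} property on \(\GsS\) itself --- are details the paper's proof leaves implicit, and you handle them correctly.
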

\begin{proof}
 Assume that there exists \(f\in\Loa{\alpha}{\si{M}}\), which does not belong to \(\cl{\ek{\T{\GsS}}}\).
 Then by \rlem{L2.1} there exists \(g\in\LdaM\) such that
 \begin{align}\label{L4.3.1}
  \int\chuu{x}{k}^\ad\frac{\dif M}{\dif\tau}g\dif\tau&=0&\text{for all }x&\in\GsS\text{, }k\in\mn{1}{q},
 \end{align}
 and
 \begin{equation}\label{L4.3.2}
  \int f^\ad\frac{\dif M}{\dif\tau}g\dif\tau
  \neq0.
 \end{equation}
 From \eqref{L4.3.1} we can derive that the \taval{\Cq} measure \((\dif M/\dif\tau)g\dif\tau\) is absolutely continuous if \(\GsS\) is an \tACs{}.
 Since \(f\in\Loa{\alpha}{\si{M}}\), it follows \(\int f^\ad(\dif M/\dif\tau)g\dif\tau=0\), a contradiction to \eqref{L4.3.2}.
\end{proof}

 We conclude the section with some examples of \tACs{s}.
 A deeper study of this class of subsets would be of interest.

\begin{exam}\label{E4.4}
 \begin{itemize}
  \item[a)] If \(S\) is a compact subset of \(G\), then \(\GsS\) is an \tACs{}.
 For if \(\mu\) is a regular \taval{\Cq} measure on \(\BsaG\) and \(\fsinv{\mu}(x)=0\), \(x\in\GsS\), then the continuous function \(\fsinv{\mu}\) has compact support and, hence, is integrable with respect to a Haar measure on \(G\).
 Applying the inversion theorem, \tcf{}~\zitaa{MR0152834}{\cthm{1.5.1}}, and a uniqueness property of the inverse Fourier-Stieltjes transform, \tcf{}~\zitaa{MR0152834}{\cthm{1.3.6}}, we obtain that \(\mu\) is absolutely continuous.
  \item[b)] Each subset of a compact abelian group is an \tACs{}.
  \item[c)] If \(S\) is an \tACs{}, then \(-S\) and \(x+S\), \(x\in G\), are \tACs{s}, and if \(S\subseteq S_1\subseteq G\), then \(S_1\) is an \tACs{}.
  \item[d)] If \(G=\Z\) and \(S=\N\) or \(G=\R\) and \(S=[0,\infty)\), \tresp{}, then the Theorem of F.~and M.~Riesz implies that \(S\) is an \tACs{}, \tcf{}~\zitaa{MR0152834}{\cthmss{8.2.1}{8.2.7}}.
  \item[e)] A theorem of Bochner claims that the set of points of \(\Z^2\) which belong to a closed sector of the plane whose opening is larger than \(\pi\), is an \tACs{}, \tcf{}~\zitaa{MR0152834}{\cthm{8.2.5}}.
 Note that for the measure \(\mu\defeq\lambda\otimes\delta_0\) on \(\Bsa{(-\pi,\pi]\times(-\pi,\pi]}\), where \(\delta_0\) denotes the Dirac measure at \(0\), one has \(\fsinv{\mu}((m,n))=0\) for all \(m\in\Z\setminus\set{0}\), \(n\in\Z\).
 This shows that the lattice points of a half-plane do not form an \tACs{}.
 \end{itemize}
\end{exam}

\section{The case of an absolutely continuous measure}\label{5}
 Let \(\dif M=W\dif\lambda\), where \(W\) is a \tamable{\BsaG} \taval{\Mggq} function on \(\Gamma\) integrable with respect to \(\lambda\).
 For brevity, set \(\LaM\eqdef\LaW\).
 It is clear that the value of \(d\) does not depend on how large the \tMec{es} are chosen.
 Recall that \(P(\gamma)\) is the orthoprojection in \(\Cq\) onto \(\ran{W(\gamma)}\), \(\gamma\in\Gamma\).
 If \(f\) is a \tam{\BsaG}able \taval{\Cq} function, then \(f\) and \(Pf\) are \tMe{}.
 Therefore, each \tMec{} contains a function \(h\) such that
\begin{align}\label{E5.1}
 h(\gamma)&\in\Ran{P(\gamma)}&\text{for all }\gamma&\in\Gamma,
\end{align}
 and shrinking the \tMec{} we can and shall assume that for all functions \(h\) of the equivalence classes of \(\LaW\) relation \eqref{E5.1} is satisfied.

 The goal of the present section is to derive expressions for \(d\) if \(q=1\) and \(\alpha\in\iva\) or if \(q\in\N\) and \(\alpha=2\).
 Let us assume first that \(q=1\) and let us denote the scalar-valued weight function \(W\) by \(w\).
 Setting \(B\defeq\setaa{\gamma\in\Gamma}{w(\gamma)\neq0}\), one has
\[
 w^\mpi(\gamma)
 =
 \begin{cases}
  1/w(\gamma),&\text{if }\gamma\in B\\
  0,&\text{if }\gamma\in\Gamma\setminus B
 \end{cases}
\]
 and condition \eqref{E5.1} implies that if \(f\in\Law\), then \(f=0\) on \(\Gamma\setminus B\).
 The distance \(d\) can be written as \[\begin{split}
 d
 &=\inf\setaa*{\noa{\chu{s}-t}}{t\in\T{\GsS}}\\
 &=\inf\setaa*{\rk*{\int\abs{\chu{s}-t}^\alpha w\dif\lambda}^{1/\alpha}}{t\in\T{\GsS}}\\
 &=\inf\setaa*{\rk*{\int\abs{\ind{B}\chu{s}-t}^\alpha w\dif\lambda}^{1/\alpha}}{t\in\ind{B}\T{\GsS}},
\end{split}\]
\(s\in S\).
 For \(\alpha\in\iva\), let \(\da \defeq\alpha/(\alpha-1)\) and \(\beta\defeq1/(\alpha-1)\).

\begin{lem}\label{L5.1}
 For any bounded linear functional \(\ell\) on \(\Law\), there exists \(h\in\Ldawpb \) such that
 \begin{align*}
  \ell(f)&=\int f h^\ad\dif\lambda,&f&\in\Law.
 \end{align*}
 The mapping \(\ell\mapsto h\) establishes an isometric isomorphism between the dual space of \(\Law\) and the space \(\Ldawpb \).
\end{lem}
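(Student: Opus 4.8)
The plan is to deduce the result from the scalar specialization of \rlem{L2.1} by absorbing the weight $w$ into the representing function. Taking $q=1$, $\dif M=w\dif\lambda$, and $\tau=\lambda$ in \rlem{L2.1}, one has $\dif M/\dif\tau=w$ and the orthoprojection reduces to $\ind{B}$; the lemma then says that every bounded linear functional $\ell$ on $\Law$ can be written as $\ell(f)=\int g^\ad w f\dif\lambda$ for a unique $g\in\Ldaw$, and that $\ell\mapsto g$ is an isometric isomorphism of the dual of $\Law$ onto $\Ldaw$. Comparing this formula with the desired $\ell(f)=\int f h^\ad\dif\lambda$ and using that $w$ is real and non-negative, I would set $h\defeq wg$, so that $h^\ad=w g^\ad$ and hence $\int f h^\ad\dif\lambda=\int g^\ad w f\dif\lambda=\ell(f)$ immediately.

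It then remains to show that $g\mapsto wg$ is an isometric bijection from $\Ldaw$ onto $\Ldawpb$, with inverse $h\mapsto w^\mpi h$. For this I would use the elementary identities $\da/\alpha=\beta$ and $\da-\beta=1$, both immediate from $\da=\alpha/\rk{\alpha-1}$ and $\beta=1/\rk{\alpha-1}$. Writing the norm on $\Ldawpb$ as $\ek{\int\abs{h}^{\da}\rk{w^\mpi}^\beta\dif\lambda}^{1/\da}$ and substituting $h=wg$, the integrand becomes $\abs{g}^{\da}w^{\da}\rk{w^\mpi}^\beta=\abs{g}^{\da}w^{\da-\beta}=\abs{g}^{\da}w$ on $B$, so $\int\abs{h}^{\da}\rk{w^\mpi}^\beta\dif\lambda=\int\abs{g}^{\da}w\dif\lambda$, which is the $\da$-th power of the $\Ldaw$-norm of $g$. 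Thus the substitution is norm-preserving; since $w^\mpi h=g$ on $B$ it is also bijective, and composing it with the isomorphism of \rlem{L2.1} yields the asserted isometric isomorphism $\ell\mapsto h$.

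Two routine points remain. Off $B$ the weight $\rk{w^\mpi}^\beta$ vanishes, so the values of $h$ on $\Gamma\setminus B$ are immaterial in $\Ldawpb$; this is consistent with the convention \eqref{E5.1}, under which every $f\in\Law$ vanishes on $\Gamma\setminus B$, so that $\int f h^\ad\dif\lambda$ does not see $h$ there and $h=wg$ is automatically supported in $B$. Moreover, for a given $h\in\Ldawpb$ the boundedness of the associated functional follows directly from H\"older's inequality applied to the factorization $f h^\ad=\rk{w^{1/\alpha}f}\rk{w^{-1/\alpha}h^\ad}$ with exponents $\alpha$ and $\da$, the factor $w^{-\da/\alpha}=w^{-\beta}=\rk{w^\mpi}^\beta$ reproducing exactly the weight of $\Ldawpb$. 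I expect the only real care to be demanded by precisely this bookkeeping on the $\lambda$-null set $\Gamma\setminus B$ together with the matching of equivalence classes in the two weighted spaces; the substantive duality is already contained in \rlem{L2.1}, so no further analytic difficulty arises.
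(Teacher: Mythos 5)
Your proposal is correct and follows essentially the same route as the paper: the paper's proof likewise reduces to \rlem{L2.1} via the substitution \(g\mapsto gw\), verifies the same norm identity \(\int\abs{gw}^\da\rk{w^\mpi}^\beta\dif\lambda=\int\abs{g}^\da w\dif\lambda\) (using \(\da-\beta=1\)), and establishes surjectivity from \(hw^\mpi w=h\), which holds by the convention \eqref{E5.1}, exactly as in your bookkeeping on \(\Gamma\setminus B\). Your closing H\"older argument is a harmless redundancy, since boundedness already follows from the isometric correspondence with \rlem{L2.1}.
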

\begin{proof}
 If \(g\in\Loa{\da}{w}\), then \(\int\abs{gw}^\da \rk{w^\mpi}^\beta\dif\lambda=\int\abs{g}^\da w\dif\lambda\), which shows that the correspondence \(g\mapsto gw\), \(g\in\Loa{\da}{w}\), is an isometry from \(\Loa{\da}{w}\) into \(\Ldawpb \).
 Moreover, if \(h\in\Loa{\da}{\rk{w^\mpi}^\beta}\), then \(hw^\mpi w=h\) by \eqref{E5.1} and \(\int\abs{hw^\mpi}^\da w\dif\lambda=\int\abs{h}^\da\rk{w^\mpi}^\beta\dif\lambda<\infty\).
 Therefore, the correspondence \(g\mapsto gw\) maps \(\Loa{\da}{w}\) onto \(\Loa{\da}{\rk{w^\mpi}^\beta}\), and the lemma follows from the well known description of the dual space of \(\Law\), \tcf{}~\rlem{L2.1}.
\end{proof}

 The preceding lemma shows that if \(f\in\Law\) and \(h\in\Ldawpb \) satisfies \eqref{E5.1}, then
\[
 \int\abs*{fh^\ad}\dif\lambda
 \leq\noa{f}\ek*{\int\abs{h}^\da\rk{w^\mpi}^\beta\dif\lambda}^{1/\da}.
\]
 It follows that under condition \eqref{E5.1} the set
\[
 \D
 \defeq\setaa*{h\in\LoA{\da}{\rk{w^\mpi}^\beta}}{\fsinv{\rk{h^\ad}}(x)=0\text{ for all }x\in\GsS}
\]
 is defined correctly and that \(\int fh^\ad\dif\lambda=0\) if \(f\in\cl{\T{\GsS}}\) and \(h\in\D\).
 Taking into account a general approximation result in Banach spaces, \tcf{}~\zitaa{MR0268655}{\cthm{7.2}}, we can conclude that the following assertion is true.

\begin{lem}\label{L5.2}
 For any \(s\in S\), the distance \(d\) is equal to \(\sup\setaa{\abs{\fsinv{\rk{h^\ad}}(s)}}{h\in\D\text{ and }  \int\abs{h}^\da\rk{w^\mpi}^\beta\dif\lambda\leq1}\).
\end{lem}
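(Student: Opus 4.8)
The plan is to read \(d\) as the distance of \(\chu{s}\) to a closed subspace of \(\Law\) and then to invoke the Banach-space duality between best approximation and annihilators, translating every ingredient through the description of the dual space provided by \rlem{L5.1}.

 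First I would replace \(\T{\GsS}\) by its closure \(M\defeq\cl{\T{\GsS}}\); since the distance of a point to a set equals its distance to the closure, \(d\) is the distance of \(\chu{s}\) to the closed subspace \(M\) of the Banach space \(\Law\). The general approximation result in \zitaa{MR0268655}{\cthm{7.2}}, itself a consequence of the Hahn-Banach theorem, then gives \(d=\sup\setaa{\abs{\ell(\chu{s})}}{\ell\in M^\perp\text{ and }\no{\ell}\leq1}\), where \(M^\perp\) denotes the set of bounded linear functionals on \(\Law\) that vanish on \(M\).

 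Next I would rewrite the three data of this supremum by means of \rlem{L5.1}. Every bounded linear functional on \(\Law\) has the form \(\ell(f)=\int fh^\ad\dif\lambda\) for a unique \(h\in\Ldawpb\), and the isometry yields \(\no{\ell}=\ek{\int\abs{h}^\da\rk{w^\mpi}^\beta\dif\lambda}^{1/\da}\); hence \(\no{\ell}\leq1\) is equivalent to \(\int\abs{h}^\da\rk{w^\mpi}^\beta\dif\lambda\leq1\). Since \(\ell\) is continuous and \(M\) is the closed linear span of the functions \(\chu{x}\), \(x\in\GsS\), the condition \(\ell\in M^\perp\) amounts to \(\int\chu{x}h^\ad\dif\lambda=\fsinv{\rk{h^\ad}}(x)=0\) for all \(x\in\GsS\), which is precisely the defining property of \(\D\). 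Finally, evaluating at \(\chu{s}\) gives \(\ell(\chu{s})=\int\chu{s}h^\ad\dif\lambda=\fsinv{\rk{h^\ad}}(s)\). Substituting these identifications into the duality formula produces the asserted expression for \(d\).

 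The only delicate point, which I expect to be the main obstacle, is the bookkeeping that makes all three quantities refer consistently to the same functional. Because the elements \(h\in\D\) satisfy \eqref{E5.1}, so that \(h=\Oe\) off \(B\), and because the representatives of \(\chu{s}\) in \(\Law\) may be taken to vanish off \(B\) as well, the pairing \(\int\chu{s}h^\ad\dif\lambda\) is independent of the chosen representative and finite, exactly as in the estimate preceding the lemma. Once this is checked, the substantive work is carried entirely by the abstract duality theorem and by \rlem{L5.1}, and no further computation is required.
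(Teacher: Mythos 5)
Your proposal is correct and follows essentially the same route as the paper, which likewise obtains the lemma by combining the dual-space description of \rlem{L5.1} with the general approximation theorem of \zitaa{MR0268655}{\cthm{7.2}}, after noting---exactly as you do---that under \eqref{E5.1} the pairing \(\int fh^\ad\dif\lambda\) is well defined and vanishes for \(f\in\cl{\T{\GsS}}\) and \(h\in\D\). Nothing further is needed.
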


 Let \(\Ds\defeq\setaa{h\in\D}{\fsinv{\rk{h^\ad}}(s)=1}\), \(s\in S\).

\begin{thm}\label{T5.3}
 Let \(q=1\), \(\alpha\in\iva\), and \(s\in S\).
 Then
 \begin{equation}\label{E5.2}
  d
  =\inf\setaa*{\noa{\chu{s}-t}}{t\in\T{\GsS}}
  =\sup\setaa*{\ek*{\int\abs{h}^\da\rk{w^\mpi}^\beta\dif\lambda}^{-1/\da}}{h\in\Ds},
 \end{equation}
 with the convention, that the right-hand side of \eqref{E5.2} is assumed to be \(0\) if \(\Ds\) is empty.
\end{thm}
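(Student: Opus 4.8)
The plan is to deduce the theorem from \rlem{L5.2} by a pure homogeneity (normalization) argument, with no further analytic input. Abbreviating \(I(h)\defeq\int\abs{h}^\da\rk{w^\mpi}^\beta\dif\lambda\) and \(A\defeq\sup\setaa{\abs{\fsinv{\rk{h^\ad}}(s)}}{h\in\D\text{ and }I(h)\leq1}\), \rlem{L5.2} reads \(d=A\). It therefore remains only to show \(A=B\), where \(B\defeq\sup\setaa{I(h)^{-1/\da}}{h\in\Ds}\) (with \(B\defeq0\) if \(\Ds=\emptyset\)); then \(d=A=B\) is exactly \eqref{E5.2}.

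Two structural facts drive the argument. First, since \(\rk{\mu h}^\ad=\overline{\mu}\,h^\ad\) for a scalar \(\mu\) and the inverse Fourier transform is linear, the map \(h\mapsto\fsinv{\rk{h^\ad}}(s)\) is conjugate-linear; as the defining relations \(\fsinv{\rk{h^\ad}}(x)=0\), \(x\in\GsS\), are homogeneous, \(\D\) is invariant under scalar multiplication. Second, \(I(\mu h)=\abs{\mu}^\da I(h)\) for every scalar \(\mu\). To obtain \(A\leq B\), take \(h\in\D\) with \(I(h)\leq1\) and set \(c\defeq\fsinv{\rk{h^\ad}}(s)\); if \(c=0\) this \(h\) contributes \(0\leq B\), while if \(c\neq0\) then \(\mu\defeq1/\overline{c}\) yields \(g\defeq\mu h\in\Ds\) (indeed \(\fsinv{\rk{g^\ad}}(s)=\overline{\mu}\,c=1\)) with \(I(g)=\abs{c}^{-\da}I(h)\), so \(\abs{c}\leq\abs{c}\,I(h)^{-1/\da}=I(g)^{-1/\da}\leq B\). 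For \(B\leq A\), let \(g\in\Ds\); then \(I(g)>0\), since otherwise \eqref{E5.1} would force \(g=0\) \tae{\lambda} and hence \(\fsinv{\rk{g^\ad}}(s)=0\neq1\). Thus \(h\defeq I(g)^{-1/\da}g\) satisfies \(I(h)=1\) and \(\abs{\fsinv{\rk{h^\ad}}(s)}=I(g)^{-1/\da}\), giving \(I(g)^{-1/\da}\leq A\).

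Finally I would reconcile the empty-case convention: if \(\Ds=\emptyset\), then by the rescaling above no \(h\in\D\) can have \(\fsinv{\rk{h^\ad}}(s)\neq0\) (such an \(h\) would produce an element of \(\Ds\)), so \(A=0=B\) and \(d=0\) as required. I do not expect a genuine obstacle here; the only points needing care are getting the phase right in the conjugate-linear normalization (so that \(g\) lands on the affine slice \(\fsinv{\rk{g^\ad}}(s)=1\) rather than merely on \(\abs{\fsinv{\rk{g^\ad}}(s)}=1\)), and the positivity \(I(g)>0\) for \(g\in\Ds\), which hinges on \eqref{E5.1} forcing members of \(\D\) to vanish off the set \(B\).
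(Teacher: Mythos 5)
Your proposal is correct and takes essentially the same route as the paper: both start from \rlem{L5.2} and exploit positive homogeneity (rescaling by modulus and phase) to identify the supremum over the unit ball of \(\D\) with \(\sup\setaa{\ek{\int\abs{h}^\da\rk{w^\mpi}^\beta\dif\lambda}^{-1/\da}}{h\in\Ds}\), with the same separate treatment of the degenerate case \(\Ds=\emptyset\). The only difference is that where the paper cites the duality relation of \zitaa{MR0224158}{\clem{7.1}} to pass from \(\sup\setaa{\G(h)}{h\in\D\text{ and }\F(h)\leq1}\) to \(\ek{\inf\setaa{\F(h)}{h\in\D\text{ and }\G(h)\geq1}}^\inv\), you verify this normalization identity by hand, in particular making explicit the positivity \(\int\abs{g}^\da\rk{w^\mpi}^\beta\dif\lambda>0\) for \(g\in\Ds\) via \eqref{E5.1}, which the paper leaves to the cited lemma.
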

\begin{proof}
 Define on the linear space \(\D\) two positive homogeneous and non-negative functionals \(\F\) and \(\G\) by \(\F(h)\defeq\ek{\int\abs{h}^\da\rk{w^\mpi}^\beta\dif\lambda}^{1/\da}\) and \(\G(h)\defeq\abs{\fsinv{\rk{h^\ad}}(s)}\), \(h\in\D\).
 If \(\G(h)=0\) for all \(h\in\D\), then the left-hand side of \eqref{E5.2} is \(0\) by \rlem{L5.2} and the right-hand side equals \(0\) by the convention made.
 If \(\G(h)\neq0\) for some \(h\in\D\), then \rlem{L5.2} and a well known duality relation, \tcf{}~\zitaa{MR0224158}{\clem{7.1}}, imply that the distance $d$ is equal to \(d=\sup\setaa{\G(h)}{h\in\D\text{ and }\F(h)\leq1}=\ek{\inf\setaa{\F(h)}{h\in\D\text{ and }\G(h)\geq1}}^\inv\).
 Since \(\ek{\inf\setaa{\F(h)}{h\in\D\text{ and }\G(h)\geq1}}^\inv={\ek{\inf\setaa{\F(h)}{h\in\D\text{ and }\G(h)=1}}^\inv}=\ek{\inf\setaa{\F(h)}{h\in\D\text{ and }\fsinv{\rk{h^\ad}}(s)=1}}^\inv=\sup\setaa{\ek{\F(h)}^\inv}{h\in\Ds}\), the theorem is proved.
\end{proof}

\begin{cor}\label{C5.4}
 The set \(\T{\GsS}\) is dense in \(\Law\) if and only if there does not exist a function \(h\in\D\setminus\set{0}\) such that \(\int\abs{h}^\da\rk{w^\mpi}^\beta\dif\lambda<\infty\).
\end{cor}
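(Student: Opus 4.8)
The plan is to deduce the statement directly from the Hahn--Banach theorem together with the description of the dual space of \(\Law\) furnished by \rlem{L5.1}, without passing through \rthm{T5.3}. Recall the standard consequence of Hahn--Banach that a linear subset \(\Lc\) of a Banach space \(X\) is dense in \(X\) if and only if the only bounded linear functional on \(X\) that vanishes on \(\Lc\) is the zero functional. I would apply this with \(X\defeq\Law\) and \(\Lc\defeq\T{\GsS}\). Since \(\abs{\chu{x}}=1\) for every character and \(w\) is integrable with respect to \(\lambda\), one has \(\int\abs{\chu{x}}^\alpha w\dif\lambda=\int w\dif\lambda<\infty\), so \(\T{\GsS}\subseteq\Law\) and the criterion is applicable.

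First I would represent an arbitrary bounded linear functional \(\ell\) on \(\Law\). By \rlem{L5.1} there is a unique \(h\in\Ldawpb\) with \(\ell(f)=\int fh^\ad\dif\lambda\) for all \(f\in\Law\), and the correspondence \(\ell\mapsto h\) is an isometric isomorphism. Consequently \(\ell=\Oe\) if and only if \(\int\abs{h}^\da\rk{w^\mpi}^\beta\dif\lambda=0\), \tie{}, if and only if \(h=\Oe\) as an element of \(\Ldawpb\); here the normalisation \eqref{E5.1} ensures that \(h\) vanishes also off \(B\), so that this means \(h\) is the zero function.

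Next I would translate the vanishing of \(\ell\) on \(\T{\GsS}\) into a condition on \(h\). As \(\T{\GsS}\) is the linear span of the characters \(\chu{x}\) with \(x\in\GsS\), the functional \(\ell\) vanishes on \(\T{\GsS}\) if and only if \(\int\chu{x}h^\ad\dif\lambda=\fsinv{\rk{h^\ad}}(x)=0\) for all \(x\in\GsS\), which is precisely the membership \(h\in\D\). Combining the two steps, \(\T{\GsS}\) is dense in \(\Law\) if and only if the only \(h\) belonging to \(\D\) with \(\int\abs{h}^\da\rk{w^\mpi}^\beta\dif\lambda<\infty\) is \(h=\Oe\); this is the asserted equivalence.

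The one point requiring care, and the place I expect to be the main obstacle, is the distinction between \(\D\) and the functionals supplied by \rlem{L5.1}. The set \(\D\) is defined so as to admit functions \(h\) that need not have finite \(\Ldawpb\)-norm, whereas the annihilator of \(\T{\GsS}\) corresponds under \rlem{L5.1} exactly to those \(h\in\D\) satisfying \(\int\abs{h}^\da\rk{w^\mpi}^\beta\dif\lambda<\infty\). This is why the finiteness condition cannot be dropped from the statement and must be carried explicitly through the argument; once this bookkeeping is done correctly, the corollary follows immediately.
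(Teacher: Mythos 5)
Your proposal is correct, and it takes a genuinely different route from the paper's own deduction. The paper obtains the corollary from \rthm{T5.3}: writing \(\delta\) for the right-hand side of \eqref{E5.2}, it first observes that any \(h\in\D\setminus\set{0}\) must satisfy \(\fsinv{\rk{h^\ad}}(s)\neq0\) for some \(s\in S\), whence \(\D\setminus\set{0}=\bigcup\setaa{a\Ds}{a\in\C\setminus\set{0},\ s\in S}\), and then runs the chain: \(\T{\GsS}\) dense in \(\Law\) \(\Leftrightarrow\) \(d=0\) for all \(s\in S\) \(\Leftrightarrow\) \(\delta=0\) for all \(s\in S\) \(\Leftrightarrow\) \(\int\abs{h}^\da\rk{w^\mpi}^\beta\dif\lambda=\infty\) for all \(h\in\D\setminus\set{0}\). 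Note that the first step there tacitly invokes the injectivity of the inverse Fourier transform on \(\lambda\)-integrable functions (if \(\fsinv{\rk{h^\ad}}\) vanished on \(S\) as well as on \(\GsS\), it would vanish on all of \(G\), forcing \(h=\Oe\)). Your argument needs no such uniqueness theorem and bypasses \rthm{T5.3}, \rlem{L5.2}, and the duality relation of~\zitaa{MR0224158}{\clem{7.1}} entirely: the Hahn--Banach annihilator criterion together with \rlem{L5.1} identifies the annihilator of \(\T{\GsS}\) with exactly those \(h\in\D\) of finite norm, and the isometry plus the normalization \eqref{E5.1} shows that the zero functional corresponds to \(h=\Oe\) as a function, not merely \tae{\lambda} on \(B\). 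You also correctly isolate the one delicate bookkeeping point, namely that \(\D\) admits elements of infinite norm --- which is exactly why the finiteness clause in the statement is not vacuous, and why in \eqref{E5.2} the right-hand side can be \(0\) even when \(\Ds\neq\emptyset\). The trade-off: within the paper's architecture the corollary is a two-line consequence of the already established quantitative formula for \(d\), whereas your proof is self-contained and slightly more elementary (though at root both rest on duality, since \rlem{L5.2} is itself a Hahn--Banach distance formula); moreover, since your argument uses only the dual description of the space, it carries over verbatim to the matricial case \(\alpha=2\), with Step~1 of \rsec{5} in place of \rlem{L5.1}, yielding Corollary~\ref{C5.6} by the identical reasoning.
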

\begin{proof}
 Note that \(h\in\D\setminus\set{0}\) if and only if \(h\in\D\) and there exists \(s\in S\) such that \(\fsinv{\rk{h^\ad}}(s)\neq0\), which gives \(\D\setminus\set{0}=\bigcup\setaa{a\Ds}{a\in\C\setminus\set{0}\text{, }s\in S}\).
 If \(\delta\) denotes the right-hand side of \eqref{E5.2}, we obtain the following chain of equivalences: \(\T{\GsS}\) is dense in \(\Law\)\tarlr{}\(d=0\) for all \(s\in S\)\tarlr{}\(\delta=0\) for all \(s\in S\)\tarlr{}\(\int\abs{h}^\da\rk{w^\mpi}^\beta\dif\lambda=\infty\) for all \(h\in\D\setminus\set{0}\).
\end{proof}

 If \(q\) is an arbitrary positive integer, an analogous result to that of \rlem{L5.1} is not true in general.
 However, for \(\alpha=2\), the above method can be extended.
 We briefly sketch the main steps.
\begin{description}
 \item[Step 1:] The correspondence \(\ell\mapsto h\), defined by \(\ell(f)=\int h^\ad f\dif\lambda\), \(f\in\Loa{2}{W}\), establishes an isometric isomorphism between the dual space of \(\Loa{2}{W}\) and the space \(\Loa{2}{W^\mpi}\).
 \item[Step 2:] \(\int\abs{h^\ad f}\dif\lambda\leq\noq{f}\ek{\int h^\ad W^\mpi h\dif\lambda}^{1/2}\) for all \(f\in\Loa{2}{W}\) and \(h\in\Loa{2}{W^\mpi}\) satisfying \eqref{E5.1}.
 \item[Step 3:] The set \(\Dt\defeq\setaa{h\in\Loa{2}{W^\mpi}}{\fsinv{\rk{h^\ad}}(x)=0\text{ for }x\in\GsS}\) is defined correctly and \(\int h^\ad f\dif\lambda=0\) for all \(f\in\Loa{2}{W}\) and \(h\in\Loa{2}{W^\mpi}\) satisfying \eqref{E5.1}. 
 \item[Step 4:] For all \(s\in S\) and \(k\in\mn{1}{q}\), the distance \(d\) is equal to \(\sup\setaa{\abs{\fsinv{\rk{h^\ad}}(s)\eu{k}}}{h\in\Dt\text{ and }  \int h^\ad W^\mpi h\dif\lambda\leq1}\).
 \item[Step 5:] For \(s\in S\) and \(k\in\mn{1}{q}\), set \(\Dtsk\defeq\setaa{h\in\Dt}{\fsinv{\rk{h^\ad}}(s)\eu{k}=1}\).
 Introducing two positive homogeneous and non-negative functionals \(\Ft\) and \(\Gt\) on \(\Dt\) by \(\Ft(h)\defeq\ek{\int h^\ad W^\mpi h\dif\lambda}^{1/2}\) and \(\Gt(h)\defeq\abs{\fsinv{\rk{h^\ad}}(s)\eu{k}}\), \(h\in\Dt\), similarly to the proof of \rthm{T5.3} one can derive the following assertion.
\end{description}

\begin{thm}\label{T5.5}
 Let \(q\in\N\) and \(s\in S\).
 Then
 \begin{equation}\label{E5.3}
  d
  =\inf\setaa*{\noq{\chuu{s}{k}-t}}{t\in\T{\GsS}}
  =\sup\setaa*{\ek*{\int h^\ad W^\mpi h\dif\lambda}^{-1/2}}{h\in\Dtsk},
 \end{equation}
 where the right-hand side of \eqref{E5.3} is to be interpreted as \(0\) if \(\Dtsk\) is empty.
\end{thm}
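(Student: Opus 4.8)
The plan is to transcribe the proof of \rthm{T5.3} into the matrix-valued setting with $\alpha=2$, using the scaffolding assembled in Steps~1--5. I would take as given the two positively homogeneous non-negative functionals $\Ft$ and $\Gt$ on $\Dt$ from Step~5, and record that Step~4 supplies, for every $s\in S$ and $k\in\mn{1}{q}$, the basic identity $d=\sup\setaa{\Gt(h)}{h\in\Dt\text{ and }\Ft(h)\leq1}$. Positive homogeneity and non-negativity of $\Ft$ and $\Gt$ are precisely the structural properties needed to convert a supremum of $\Gt$ constrained by $\Ft$ into a reciprocal infimum of $\Ft$ constrained by $\Gt$.

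I would then split into the same two cases as in \rthm{T5.3}. If $\Gt(h)=0$ for all $h\in\Dt$, the displayed supremum in Step~4 is $0$, so $d=0$; at the same time no element of the linear set $\Dt$ can satisfy $\fsinv{\rk{h^\ad}}(s)\eu{k}=1$, so $\Dtsk$ is empty and the right-hand side of \eqref{E5.3} is $0$ by the stated convention, giving equality. If instead $\Gt(h)\neq0$ for some $h\in\Dt$, I would apply the same duality relation invoked in the proof of \rthm{T5.3} (\tcf{}~\zitaa{MR0224158}{\clem{7.1}}) to obtain $d=\ek{\inf\setaa{\Ft(h)}{h\in\Dt\text{ and }\Gt(h)\geq1}}^\inv$.

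It remains to reduce the constraint $\Gt(h)\geq1$ to membership in $\Dtsk$. Homogeneity lets me pass from $\Gt(h)\geq1$ to $\Gt(h)=1$, since scaling $h$ toward the boundary only decreases $\Ft$. Given $h\in\Dt$ with $\abs{\fsinv{\rk{h^\ad}}(s)\eu{k}}=1$, I would set $c\defeq\fsinv{\rk{h^\ad}}(s)\eu{k}$ and replace $h$ by $ch$: since $\rk{ch}^\ad=\overline{c}\,h^\ad$ and the inverse Fourier transform is linear, this normalizes $\fsinv{\rk{\rk{ch}^\ad}}(s)\eu{k}=1$, keeps $ch$ in the linear set $\Dt$, and preserves $\Ft$ because $\abs{c}=1$. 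Hence $\inf\setaa{\Ft(h)}{h\in\Dt\text{ and }\Gt(h)=1}=\inf\setaa{\Ft(h)}{h\in\Dtsk}$, and taking reciprocals gives $d=\sup\setaa{\ek{\Ft(h)}^\inv}{h\in\Dtsk}$, which is \eqref{E5.3} upon writing $\Ft(h)=\ek{\int h^\ad W^\mpi h\dif\lambda}^{1/2}$. The step that demands genuine care is exactly this last normalization: in the vector-valued case one must verify that the scalar phase $c$ normalizes the $k$-th component $\fsinv{\rk{h^\ad}}(s)\eu{k}$ itself, not merely its modulus, while respecting the linear constraints defining $\Dt$. I would also expect that, for full rigor, the sketched Steps~1--3 (the dual representation of $\Loa{2}{W}$, the H\"older-type bound of Step~2, and the well-definedness of $\Dt$ under \eqref{E5.1}) need to be proved in detail rather than merely asserted.
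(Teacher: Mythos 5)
Your proposal is correct and takes essentially the same route the paper intends: the paper derives \rthm{T5.5} by repeating the proof of \rthm{T5.3} with the functionals \(\Ft\) and \(\Gt\) from Steps~1--5 and the duality relation of~\zitaa{MR0224158}{\clem{7.1}}, exactly as you do. Your explicit check of the phase normalization \(h\mapsto ch\) with \(c=\fsinv{\rk{h^\ad}}(s)\eu{k}\) merely fills in a detail the paper leaves implicit in the phrase ``similarly to the proof of \rthm{T5.3}''.
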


\begin{cor}[\tcf{}~\zitaa{MR0426126}{\ccor{3.16}}]\label{C5.6}
 Let \(q\in\N\).
 The set \(\T{\GsS}\) is dense in \(\Loa{2}{W}\) if and only if there does not exist a function \(h\in\Dt\setminus\set{0}\) such that \(\int h^\ad W^\mpi h\dif\lambda<\infty\).
\end{cor}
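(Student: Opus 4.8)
The plan is to follow, in spirit, the argument already carried out for \(q=1\), replacing \rthm{T5.3} by \rthm{T5.5} and carrying the extra coordinate index \(k\) throughout. Writing \(\delta_{s,k}\) for the right-hand side of \eqref{E5.3}, \rthm{T5.5} gives \(d=\delta_{s,k}\) for every \(s\in S\) and \(k\in\mn{1}{q}\), and the corollary will follow from a chain of equivalences linking density of \(\T{\GsS}\), the vanishing of all \(\delta_{s,k}\), and the nonexistence of a nonzero finite-energy member of \(\Dt\).

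First I would record the multivariate analogue of the decomposition of \(\Dt\setminus\set{\Oe}\). A member \(h\in\Dt\) is nonzero if and only if \(\fsinv{\rk{h^\ad}}(s)\eu{k}\neq0\) for some \(s\in S\) and some \(k\in\mn{1}{q}\): if all these quantities vanish, then together with the defining relations \(\fsinv{\rk{h^\ad}}(x)=0\), \(x\in\GsS\), the inverse transform \(\fsinv{\rk{h^\ad}}\) vanishes on all of \(G=S\cup\GsS\); since each entry of \(h^\ad\) is \(\lambda\)-integrable (this is what renders \(\fsinv{\rk{h^\ad}}\) meaningful, and is supplied by the inequality of Step~2 applied to \(f=\chuu{x}{k}\in\Loa{2}{W}\)), the uniqueness theorem for the inverse Fourier transform on \(\LlCq\) forces \(h^\ad=\Oe\) \tae{\lambda}, and \eqref{E5.1} then yields \(h=\Oe\) in \(\Loa{2}{W^\mpi}\). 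Because \(h\mapsto\fsinv{\rk{h^\ad}}(s)\eu{k}\) is conjugate linear, one may normalise to obtain \(\Dt\setminus\set{\Oe}=\bigcup\setaa{a\Dtsk}{a\in\C\setminus\set{0}\text{, }s\in S\text{, }k\in\mn{1}{q}}\).

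Next I would reduce density to the distances. Since the characters \(\chu{x}\), \(x\in G\), span a dense subspace of \(\Loa{2}{W}\) (the measure \(W\dif\lambda\) being finite), and since every generator \(\chuu{x}{k}\) with \(x\in\GsS\) already lies in \(\T{\GsS}\), the set \(\T{\GsS}\) is dense in \(\Loa{2}{W}\) if and only if each remaining generator \(\chuu{s}{k}\), \(s\in S\), \(k\in\mn{1}{q}\), belongs to \(\cl{\T{\GsS}}\), that is, if and only if \(d=0\) for all such \(s\) and \(k\). Feeding this into \rthm{T5.5}, density is equivalent to \(\delta_{s,k}=0\) for all \(s,k\). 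For fixed \(s,k\) the supremum \(\delta_{s,k}\) equals \(0\) exactly when every \(h\in\Dtsk\) has \(\int h^\ad W^\mpi h\dif\lambda=\infty\) (under the convention \(\infty^{-1/2}=0\), the empty case being covered by \rthm{T5.5}); and since \(\int\rk{ah}^\ad W^\mpi\rk{ah}\dif\lambda=\abs{a}^2\int h^\ad W^\mpi h\dif\lambda\), finiteness of the energy is invariant under the scaling in the union above. Hence \(\delta_{s,k}=0\) for all \(s,k\) is equivalent to \(\int h^\ad W^\mpi h\dif\lambda=\infty\) for every \(h\in\Dt\setminus\set{\Oe}\), which is precisely the negation of the asserted existence of a nonzero \(h\in\Dt\) with finite energy. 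Concatenating the equivalences proves the corollary.

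I expect the only genuinely delicate point to be the uniqueness argument in the second paragraph: one must ensure that \(\fsinv{\rk{h^\ad}}\) is formed from an honestly \(\lambda\)-integrable function and then invoke injectivity of the inverse Fourier transform on \(\LlCq\) in the present locally compact abelian setting, finally using \eqref{E5.1} to pass from \(h^\ad=\Oe\) to \(h=\Oe\) in \(\Loa{2}{W^\mpi}\). The remaining work—the density reduction and the formal unwinding of the supremum—is routine and parallels the case \(q=1\).
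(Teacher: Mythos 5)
Correct, and essentially the paper's own approach: the paper gives no separate proof of Corollary~\ref{C5.6}, which is intended to follow exactly as Corollary~\ref{C5.4} --- the same decomposition \(\Dt\setminus\set{\Oe}=\bigcup\setaa{a\Dtsk}{a\in\C\setminus\set{0}\text{, }s\in S\text{, }k\in\mn{1}{q}}\) followed by the same chain of equivalences (density \(\Leftrightarrow\) \(d=0\) for all \(s,k\) \(\Leftrightarrow\) the supremum in \rthm{T5.5} vanishes \(\Leftrightarrow\) infinite energy for every nonzero member of \(\Dt\)) --- and your proposal reproduces precisely this with the coordinate index \(k\) carried through. Your second paragraph (integrability of the entries of \(h^\ad\) via Step~2 applied to \(f=\chuu{x}{k}\), then uniqueness of the inverse Fourier transform) simply makes explicit what the paper's ``Note that \(h\in\D\setminus\set{0}\) if and only if\dots'' step in the proof of Corollary~\ref{C5.4} uses tacitly, so there is no substantive divergence.
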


\bibliography{trigappr_arxiv}
\bibliographystyle{bababbrv}

\vfill\noindent
\begin{minipage}{0.5\textwidth}
 Universit\"at Leipzig\\
 Fakult\"at f\"ur Mathematik und Informatik\\
 PF~10~09~20\\
 D-04009~Leipzig
\end{minipage}
\begin{minipage}{0.49\textwidth}
 \begin{flushright}
  \texttt{
   klotz@math.uni-leipzig.de\\
   maedler@math.uni-leipzig.de
  } 
 \end{flushright}
\end{minipage}

\end{document}